\definecolor{dkgreen}{rgb}{0,0.6,0}
\definecolor{gray}{rgb}{0.5,0.5,0.5}
\definecolor{mauve}{rgb}{0.58,0,0.82}
\tiny\color{gray},
\theoremstyle{definition}
\newtheorem{lemma}{Lemma}[section]
\newtheorem{definition}{Definition}[section]
\newcommand{\ep}    {\epsilon}
\newcommand{\RR}    {\mathbb{R}}
\newtheorem{thm}{Theorem}
\newtheorem{cor}[thm]   {Corollary}
\newtheorem{rem}[thm]   {Remark}
\newtheorem{defn}[thm]  {Definition}
\newtheorem{ex}[thm]    {Example}
\newtheorem{prop}[thm]  {Proposition}
\newcounter{foo}  \Alph{foo}
\newenvironment{example}
{\medskip\par\noindent{\sc Example}\ }
{\par}
\title{Merge trees in discrete Morse theory}
\author{Benjamin Johnson, Nicholas A. Scoville}
\date{July 2020}
\begin{document}
\tikzset{->-/.style={decoration={
  markings,
  mark=at position .5 with {{->}}},postaction={decorate}}}

\maketitle

\begin{abstract}
In this paper, we study merge trees induced by a discrete Morse function on a tree.  Given a discrete Morse function, we provide a method to constructing an induced merge tree and define a new notion of equivalence of discrete Morse functions based on the induced merge tree.  We then relate the matching number of a tree to a certain invariant of the induced merge tree. Finally, we count the number of merge trees that can be induced on a star graph and characterize the induced merge tree.
\end{abstract}

\section{Introduction}

Topological data analysis seeks to understand a set of data by studying topological properties of that data.  One highly successful tool in this regard is persistent homology.  Persistence has been used to study statistical mechanics \cite{mech17}, hypothesis testing \cite{Hyp14}, image analysis \cite{Carl09}, complex networks \cite{Sco17}, and many other phenomena.  Recently, there has been interest in studying merge trees, a special kind of persistence \cite{Lamdge,Oster}. Part of the advantage of studying a merge tree instead of the persistence diagram is that the merge tree gives more detailed information about precisely which components merged with which other components.  It tracks not only the lifetime of a component but its evolution as well.

In \cite{Curry-2017}, Justin Curry studies functions on the unit interval that have the same persistent homology.  In this smooth setting, Curry develops a merge tree associated to a Morse set, an abstraction of path components associated to a Morse function on a compact, connected manifold. He is then able to count merge trees under a suitable notion of equivalence.  In this paper, we take up a similar problem in a purely discrete setting; that is, given a discrete Morse function on a tree (i.e. 1-dimensional abstract simplicial complex), we associate a tree, appropriately called a merge tree (Definition \ref{thm: construct mereg tree}). We describe a method to obtain a merge tree from a discrete Morse function on a tree in Theorem \ref{thm: construct mereg tree}.  After defining a notion of equivalence of merge trees, we prove that a certain invariant of an induced merge tree of $T$ yields a lower bound for the matching number of $T$ in Proposition \ref{prop: impasse matching}. Section \ref{Comparison with other notions of equivalence} is devoted to comparing merge equivalent discrete Morse functions with other notions of equivalence of discrete Morse functions.   Then in Section \ref{Merge tree of a star graph}, we give a characterization of merge trees induced by a discrete Morse function on a star graph. Finally, we share some future directions in Section \ref{Conclusion and future work}.

\section{Background}

\subsection{Graphs and trees}

Let $G=(V(G),E(G))$ be a finite, loopless graph without multi-edges (i.e. a $1$-dimensional abstract simplicial complex).   We call an edge or a vertex of $G$ a \textbf{simplex}. If $e=uv$ is an edge, we say that the edge $e$ is \textbf{incident} with vertex $v$ and that $u$ and $v$ are adjacent. We use $|V(G)|$ to denote the number of vertices of $G$ and $|E(G)|$ to denote the number of edges of $G$.\\

We work exclusively with trees in this paper.  Here we recall several important characterizations of trees.  They will be utilized without further reference.

\begin{thm}\label{thm char of trees}(Characterization of trees) Let $G$ be a connected graph with $v$ vertices and $e$ edges. The following are equivalent:
\begin{enumerate}
    \item[a)] Every two vertices of $G$ are connected by a unique path.
    \item[b)] $v=e+1.$
    \item[c)] $G$ contains no cycles.
    \item[d)] $b_1(G)=0$ where $b_1$ is the first Betti number of $G$ (\cite[Chapter II.4]{Renzo}).
    \item[e)] The removal of any edge from $G$ results in a disconnected graph.
\end{enumerate}

\end{thm}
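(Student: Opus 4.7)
The plan is to establish a cyclic chain of implications $(a) \Rightarrow (c) \Rightarrow (b) \Rightarrow (e) \Rightarrow (c) \Rightarrow (a)$, which is enough for the mutual equivalence of (a), (b), (c), and (e), and then handle $(c) \Leftrightarrow (d)$ separately. The implication $(a) \Rightarrow (c)$ is by contrapositive: any cycle $v_1 v_2 \cdots v_k v_1$ yields two distinct paths between $v_1$ and $v_2$, namely the edge itself and the walk through the rest of the cycle.

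For $(c) \Rightarrow (b)$ I would induct on the number of vertices, using the standard fact that a finite, connected, acyclic graph with at least two vertices contains a leaf (for example, by considering the endpoints of a longest path, each of which must have degree one lest we extend the path or produce a cycle). Removing such a leaf and its incident edge preserves connectedness and acyclicity while decreasing both $v$ and $e$ by one, so the inductive hypothesis closes the step; the base case $v = 1$ is trivial.

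For $(b) \Rightarrow (e)$, I would observe that removing any edge from a graph satisfying (b) leaves $v$ vertices and only $v - 2$ edges, which is strictly less than the $v - 1$ edges required for any connected graph on $v$ vertices; that minimum-edge fact admits a short independent induction via spanning-tree construction. For $(e) \Rightarrow (c)$, I would argue the contrapositive: any edge lying on a cycle admits a bypass through the rest of the cycle, so its removal cannot disconnect the graph. Finally, $(c) \Rightarrow (a)$: connectedness supplies at least one $u$-$v$ path, and if two distinct such paths existed, tracking them from the point of first divergence up to the point of first re-merger would produce a simple cycle, contradicting (c).

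The equivalence $(c) \Leftrightarrow (d)$ follows from the fact that for a finite graph, $b_1(G)$ equals the dimension of the cycle space, which vanishes exactly when $G$ is acyclic; one could equally derive $(b) \Leftrightarrow (d)$ directly from the formula $b_1(G) = e - v + 1$ for a connected graph. The most delicate step is probably the extraction of a cycle in $(c) \Rightarrow (a)$, where one must isolate a simple cycle from the union of two distinct paths; this is why the argument focuses on the first divergence and first re-merger rather than naively taking a symmetric difference of edge sets.
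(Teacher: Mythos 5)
Your argument is correct, but there is nothing in the paper to compare it against: the authors explicitly decline to prove Theorem~\ref{thm char of trees}, stating that ``Proofs of the equivalence of the statements may be found in any graph theory textbook,'' and cite Chartrand. What you have written is a sound, self-contained version of that standard textbook proof. The cyclic chain $(a)\Rightarrow(c)\Rightarrow(b)\Rightarrow(e)\Rightarrow(c)\Rightarrow(a)$ does establish the mutual equivalence of $(a)$, $(b)$, $(c)$, $(e)$, and each link checks out: the leaf-removal induction for $(c)\Rightarrow(b)$ is fine once you justify the existence of a leaf (your longest-path argument does this); $(b)\Rightarrow(e)$ correctly reduces to the auxiliary fact that a connected graph on $v$ vertices has at least $v-1$ edges, which you rightly flag as needing its own short induction; and you correctly identify the one genuinely delicate point, namely extracting a simple cycle from two distinct $u$--$v$ paths in $(c)\Rightarrow(a)$ by locating the first divergence and first re-merger rather than taking a symmetric difference. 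Tying in $(d)$ via $b_1(G)=e-v+1$ for connected $G$ is also the standard route and is consistent with how the paper uses $b_1$ as a characterization. In short: correct, complete modulo the two small lemmas you already acknowledged, and exactly the argument the paper is outsourcing to its reference.
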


\noindent A connected graph that satisfies any of the above characterizations is called a \textbf{tree}. Proofs of the equivalence of the statements may be found in any graph theory textbook (e.g. \cite[Chapter 2.2]{Chartrand16}). A disconnected graph $F$ such that each component of $F$ is a tree is called a \textbf{forest}.  For any vertex $v\in F$, we let $F[v]$ denote the connected component of $F$ containing $v$. It immediately follows that if $F$ is a forest with two distinct vertices $u,v\in F$, then there is a path between two vertices $u$ and $v$ if and only if $F[u]=F[v]$.

\subsection{Discrete Morse theory}

Our references for the basics of discrete Morse theory are \cite{Forman-2002,KnudsonBook, Koz20, DMTSco}.  There are several different ways of viewing a discrete Morse function.  For our purposes, we make the following definition:

\begin{defn}\label{discreteMF}  Let $G$ be a graph.  A function $f\colon G \to \RR$ is called  \textbf{weakly increasing} if $f(v)\leq f(e)$ whenever $v \subseteq e$.  A \textbf{discrete Morse function} $f\colon G \to \RR$ is a weakly increasing function which is at most 2--1 and satisfies the property that if $f(\sigma)=f(\tau)$, then either $\sigma\subseteq \tau$ or $\tau\subseteq \sigma$. Any simplex $\sigma$ on which $f$ is 1--1 is called \textbf{critical} and the value $f(\sigma)$ is a \textbf{critical value} of $f$.
\end{defn}

\begin{ex}\label{ex: first dmf}  Define the function $f$ on $T$ as follows:
$$
\begin{tikzpicture}[scale=.8]

\node[inner sep=2pt, circle] (0) at (0,0) [draw] {};
\node[inner sep=2pt, circle] (1) at (3,0) [draw] {};
\node[inner sep=2pt, circle] (2) at (-3,2) [draw] {};
\node[inner sep=2pt, circle] (3) at (0,2) [draw] {};
\node[inner sep=2pt, circle] (4) at (0,4) [draw] {};
\node[inner sep=2pt, circle] (5) at (3,4) [draw] {};

\path[style=semithick] (0) edge node[anchor=north]{{$5$}}(1);
\path[style=semithick] (0) edge node[anchor=east]{{$9$}}(3);
\path[style=semithick] (2) edge node[anchor=north]{{$8$}}(3);
\path[style=semithick] (3) edge node[anchor=east]{{$10$}}(4);
\path[style=semithick] (4) edge node[anchor=south]{{$3$}}(5);

\node[anchor = north]  at (0) {{$0$}};
\node[anchor = north]  at (1) {{$4$}};
\node[anchor = east]  at (2) {{$7$}};
\node[anchor = west]  at (3) {{$6$}};
\node[anchor = south]  at (4) {{$1$}};
\node[anchor = south]  at (5) {{$2$}};
\end{tikzpicture}
$$

\noindent Then $f$ is a discrete Morse function.  Note that all values are critical.
\end{ex}

\begin{defn}\label{level} Let $G$ be a graph.  Given $a \in \mathbb R$ the \textbf{level subcomplex} $G_a$ is defined to be the induced subgraph of $G$ consisting of all simplices $\sigma$ with $f(\sigma) \leq a$.  For each critical value $c_0< \ldots < c_{m-1}$ of $f$, we consider the induced sequence of level subcomplexes $\{v\}=G_{c_0} \subset G_{c_1}\subset \ldots \subset G_{c_{m-1}}$.  In the sequel, we will use the notation $G_{c_i-\ep}$ to denote the level subcomplex immediately preceding $G_{c_i}$; that is, $\ep$ is chosen so that $f(\sigma)<c_i-\ep<c_i$ for every $\sigma\in G$ such that $f(\sigma)<c_i$.
\end{defn}

\section{Merge trees}

In this section we introduce merge trees, our main object of study.

\subsection{Basics of merge trees}

\begin{defn}\label{defn merge tree} A \textbf{binary tree} is a rooted tree where each vertex has at most two children, and each child is designated as its \textbf{left} (L) or \textbf{right} (R) child.  A binary tree is \textbf{full} if every vertex has $0$ or $2$ children. A \textbf{merge tree} is a full binary tree. A node with exactly one neighbor is a \textbf{leaf node} or \textbf{leaf}. Otherwise, a node with more than one neighbor is an \textbf{internal node}.
\end{defn}

Although graphs and merge trees are different objects, they both look the same and consist of vertices and edges.  To help distinguish them, we reserve the term ``node" for merge trees and ``vertex" for graphs.

\begin{rem}\label{rem: parent/child} We will view a merge tree upside down from how one normally views a binary tree; that is, the root is drawn at the bottom of the tree, as opposed to the top. However, we maintain the parent/child relationship language so that when considering the merge tree, a child will be above the parent.
\end{rem}

\begin{ex}
Consider the merge tree below:

$$
\begin{tikzpicture}

\node[inner sep=2pt, circle] (0) at (0,2) [draw] {};
\node[inner sep=2pt, circle] (1) at (2,2) [draw] {};
\node[inner sep=2pt, circle] (2) at (3,1) [draw] {};
\node[inner sep=2pt, circle] (3) at (1,1) [draw] {};
\node[inner sep=2pt, circle] (4) at (2,0) [draw] {};

\draw[-]  (0)--(3) node[midway, below] {};
\draw[-]  (3)--(1) node[midway, below] {};
\draw[-]  (3)--(4) node[midway, below] {};
\draw[-]  (2)--(4) node[midway, below] {};

\node[anchor = east]  at (0) {{$a$}};
\node[anchor = west]  at (1) {{$b$}};
\node[anchor = north]  at (3) {{$x$}};

\end{tikzpicture}
$$

To illustrate Remark \ref{rem: parent/child}, we say that $a$ and $b$ are children of $x$, even though $x$ is below $a$ and $b$.
\end{ex}

\begin{rem}
Note that the left and right information is part of the definition of a merge tree so that

$$
\begin{tikzpicture}

\node[inner sep=2pt, circle] (0) at (0,0) [draw] {};
\node[inner sep=2pt, circle] (1) at (-1,1) [draw] {};
\node[inner sep=2pt, circle] (2) at (1,1) [draw] {};
\node[inner sep=2pt, circle] (3) at (-2,2) [draw] {};
\node[inner sep=2pt, circle] (4) at (0,2) [draw] {};

\draw[-]  (0)--(1) node[midway, below] {};
\draw[-]  (0)--(2) node[midway, below] {};
\draw[-]  (1)--(4) node[midway, below] {};
\draw[-]  (1)--(3) node[midway, below] {};

\node[anchor = north]  at (0) {{$L$}};
\node[anchor = east]  at (1) {{$L$}};
\node[anchor = south]  at (2) {{$R$}};
\node[anchor = south]  at (3) {{$L$}};
\node[anchor = south]  at (4) {{$R$}};

\end{tikzpicture}
\begin{tikzpicture}

\node[inner sep=2pt, circle] (0) at (0,0) [draw] {};
\node[inner sep=2pt, circle] (1) at (-1,1) [draw] {};
\node[inner sep=2pt, circle] (2) at (1,1) [draw] {};
\node[inner sep=2pt, circle] (3) at (0,2) [draw] {};
\node[inner sep=2pt, circle] (4) at (2,2) [draw] {};

\draw[-]  (0)--(1) node[midway, below] {};
\draw[-]  (0)--(2) node[midway, below] {};
\draw[-]  (2)--(4) node[midway, below] {};
\draw[-]  (2)--(3) node[midway, below] {};

\node[anchor = north]  at (0) {{$L$}};
\node[anchor = south]  at (1) {{$L$}};
\node[anchor = south]  at (2) {{$R$}};
\node[anchor = south]  at (3) {{$L$}};
\node[anchor = south]  at (4) {{$R$}};

\end{tikzpicture}
$$
are different merge trees even though they are isomorphic as graphs. We will sometimes suppress the L and R labelings below, as the position on the page makes a node's L or R status clear.

\end{rem}

To any discrete Morse function on a tree, we are able to associate a merge tree through the following construction.

\begin{thm}\label{thm: construct mereg tree} Let $f\colon T \to \RR$ be a discrete Morse function on a tree $T$. Then $f$ induces a merge tree $M_f=M$.
\end{thm}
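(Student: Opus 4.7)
My plan is to give an explicit algorithm that processes the simplices of $T$ in increasing order of $f$-value and produces the binary tree $M_f$. Throughout the construction, I maintain the connected components of the current level subcomplex, each tagged with a ``representative node'' of the partially built tree; the merge tree $M_f$ is the object obtained when only one component remains.

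Before defining the algorithm I will record three local facts about how $T_a$ evolves as $a$ sweeps upward. \textit{(i)} When $a$ crosses the value of a critical vertex $v$, the vertex $v$ is introduced as an isolated new component: indeed, if $e'$ is any edge of $T$ incident to $v$, then $f(e')\geq f(v)$ by weak increasing, and equality would force $(v,e')$ to be a non-critical pair, contradicting criticality of $v$; hence no incident edge is yet present. \textit{(ii)} When $a$ crosses the value of a critical edge $e=uv$, both endpoints $u$ and $v$ already lie in $T_{a-\epsilon}$ (by weak increasing plus criticality, $f(u),f(v)<f(e)$), and because $T$ is acyclic they must lie in \emph{distinct} components of $T_{a-\epsilon}$: if they lay in the same one, the existing path between them together with $e$ would form a cycle in $T$, contradicting the tree characterization. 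Thus $e$ strictly merges two components into one. \textit{(iii)} When $a$ crosses the value of a non-critical pair $(v,e)$ with $v\subset e$, the two simplices attach $v$ (a new vertex) to the component already containing the other endpoint $u$, without creating or merging components; here I use the at-most-$2$--$1$ property to conclude that every other edge incident to $v$ has strictly larger $f$-value and so is not yet present.

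With these facts the algorithm is immediate. I start at the global minimum $c_0$, which is necessarily attained at a critical vertex $v_0$; I declare the leaf $n_{v_0}$ of $M$ and assign it as the representative of the single component $\{v_0\}$. Then I process the remaining events in increasing $f$-value: on a critical vertex $v$, create a fresh leaf $n_v$ of $M$ and register the new singleton component with representative $n_v$; on a non-critical pair $(v,e)$, simply update the component bookkeeping by joining $v$ to the component of the other endpoint $u$, without touching $M$; on a critical edge $e=uv$, take the representatives $n_u$ and $n_v$ of the two (necessarily distinct, by (ii)) components containing $u$ and $v$, create a new internal node $n_e$ of $M$ with $n_u$ and $n_v$ as its left and right children respectively (the L/R assignment being a convention fixed once and for all), and declare $n_e$ the representative of the merged component.

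To finish, I verify that the output is a merge tree. By construction every non-leaf node of $M$ is created at a critical edge and has exactly two children, so $M$ is full; it is rooted at the representative of the unique final component, which exists because $T$ is connected. I expect the substantive content of the proof---the only place where the tree hypothesis is genuinely used---to be item (ii): the claim that each critical edge merges two \emph{distinct} components. Everything else is bookkeeping made routine by the sharpness of the weak-increasing and at-most-$2$--$1$ conditions in the definition of a discrete Morse function.
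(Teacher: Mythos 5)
Your construction is correct, and the key point is in the right place: fact \textit{(ii)} --- that a critical edge always joins two \emph{distinct} components of the level subcomplex, by acyclicity of $T$ --- is exactly where the tree hypothesis does its work, and your facts \textit{(i)} and \textit{(iii)} correctly use criticality and the at-most-$2$--$1$ condition to show that critical vertices enter as isolated components and non-critical pairs never create or merge components. However, your route is genuinely different from the paper's. You sweep \emph{upward} through the values of $f$, maintaining components with representative nodes and creating an internal node at each merge; the paper instead builds $M$ recursively starting from the \emph{largest} critical edge value (the root) and working toward the leaves, defining the two children of the node for a critical edge $uv$ to be labelled by $\max\{f(\sigma):\sigma\in T_{c_i-\ep}[u],\ \sigma\text{ critical}\}$ and the analogous maximum over $T_{c_i-\ep}[v]$. (The paper explicitly remarks that the upward sweep is awkward because one does not know in advance where a newly born component will attach; your representative-node bookkeeping is precisely what resolves that awkwardness, and one can check that your ``representative of a component'' coincides with the paper's ``node carrying the maximal critical value of that component,'' so the two constructions yield the same underlying rooted tree.) The one substantive omission is the left/right designation: the paper's definition of a merge tree includes the L/R labels, and the paper fixes a specific rule (the root is L, and at each merge the child whose component contains the smaller minimum value inherits the parent's direction while the other child gets the opposite one). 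You defer this to ``a convention fixed once and for all'' without stating it. For the bare existence claim of this theorem that is harmless, but since $M_f$ must be a well-defined object --- merge equivalence (Definition \ref{def: merege equiv}) and the star-graph count both depend on the L/R structure --- you should state the rule explicitly rather than leave it as an unspecified convention.
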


\begin{proof}
Let $f\colon T \to \RR$ be a discrete Morse function with critical values $c_0<c_1<c_2<\ldots < c_m$ and write $C:=\{c_0, c_1, \ldots, c_m\}$. We will construct a merge tree whose node set is in 1--1 correspondence with $C$. In order to organize information, we will label each node of the merge tree by defining a function $f_M$ that takes in nodes of $M$ and yields real numbers. Furthermore, each node will be given a direction L or R. We construct $M$ inductively on the critical edges of $f$ in reverse order.\\

Begin by creating a node $n_{c_m}$ corresponding to the critical edge in $T$ labeled $c_m$.  Define $f_M(n_{c_m}):= c_m$ along with direction L. \\

Inductively, let $n_{c_i}$ be a node of $M$ corresponding to a critical edge $uv\in C$.   Create two child nodes of $n_{c_i}$  called $n_v$ and $n_u$.  Define $f_M(n_u):=\max\{f(\sigma) \colon \sigma \in T_{c_{i}-\ep}[u], \sigma \text{ critical}\}$ and $f_M(n_v):=\max\{f(\sigma) \colon \sigma \in T_{c_{i}-\ep}[v], \sigma \text{ critical}\}$ (see Definition \ref{level} for meaning of $T_{c_i-\ep}$).  Note that the values of the child nodes can be values from a critical vertex or a critical edge.  If $\min\{f(\sigma) \colon \sigma \in T_{c_{i}-\ep}[u]\} < \min\{f(\sigma) \colon \sigma \in T_{c_{i}-\ep}[v], \sigma \text{ critical}\}$, then give $n_u$ the same direction (L or R) as that of $n_{c_i}$ and give $n_v$ the opposite direction.

Continue over all critical edges to obtain $M$.
\end{proof}

It can be difficult to build the induced merge tree starting from the ``top down" or the smallest value of the discrete Morse function since when adding new nodes to the merge tree, it is often unclear where a node is placed on the merge tree.  This is because where it is placed depends on which component(s) it ends up merging to and when.  Fortunately, Theorem \ref{thm: construct mereg tree} is  starting from the ``bottom up" or the largest value of the discrete Morse function.  We give an example below.

\begin{ex} To illustrate the construction of Theorem \ref{thm char of trees}, we will take the  discrete Morse function from Example \ref{ex: first dmf}.  We begin by identifying the critical edge values and placing them in reverse order:  $10, 9, 8, 5, 3$. The largest value is 10, so it corresponds to a node in $M$ with label 10 and direction L:

$$
\begin{tikzpicture}

\node[inner sep=2pt, circle] (10) at (5,0) [draw] {};

\node[anchor = south]  at (10) {{$10L$}};
\end{tikzpicture}
$$

We then look at the level subcomplex $T_{10-\ep}$ and identify the largest value in each of the trees that were incident with 10.
$$
\begin{tikzpicture}[scale=.8]

\node[inner sep=2pt, circle] (0) at (0,0) [draw] {};
\node[inner sep=2pt, circle] (1) at (3,0) [draw] {};
\node[inner sep=2pt, circle] (2) at (-3,2) [draw] {};
\node[inner sep=2pt, circle] (3) at (0,2) [draw] {};
\node[inner sep=2pt, circle] (4) at (0,4) [draw] {};
\node[inner sep=2pt, circle] (5) at (3,4) [draw] {};

\path[style=semithick] (0) edge node[anchor=north]{{$5$}}(1);
\path[style=semithick] (0) edge node[anchor=east]{{$9$}}(3);
\path[style=semithick] (2) edge node[anchor=north]{{$8$}}(3);
\path[style=semithick] (4) edge node[anchor=south]{{$3$}}(5);

\node[anchor = north]  at (0) {{$0$}};
\node[anchor = north]  at (1) {{$4$}};
\node[anchor = east]  at (2) {{$7$}};
\node[anchor = west]  at (3) {{$6$}};
\node[anchor = south]  at (4) {{$1$}};
\node[anchor = south]  at (5) {{$2$}};

\end{tikzpicture}
$$

In this case, the two values are $3$ and $9$.  To determine which is to the left and which is to the right, we look for the tree with the minimum value.  In this case, $0<1$ so that $9$ shares the same direction as 10.  We thus obtain

$$
\begin{tikzpicture}

\node[inner sep=2pt, circle] (3) at (6,1) [draw] {};

\node[inner sep=2pt, circle] (9) at (3,2) [draw] {};
\node[inner sep=2pt, circle] (10) at (5,0) [draw] {};

\draw[-]  (3)--(10) node[midway, below] {};
\draw[-]  (10)--(9) node[midway, below] {};

\node[anchor = north]  at (3) {{$3R$}};
\node[anchor = north]  at (9) {{$9L$}};
\node[anchor = north]  at (10) {{$10L$}};
\end{tikzpicture}
$$

We move next to $9$, and consider the level subcomplex $T_{9-\ep}$:
$$
\begin{tikzpicture}[scale=.8]

\node[inner sep=2pt, circle] (0) at (0,0) [draw] {};
\node[inner sep=2pt, circle] (1) at (3,0) [draw] {};
\node[inner sep=2pt, circle] (2) at (-3,2) [draw] {};
\node[inner sep=2pt, circle] (3) at (0,2) [draw] {};
\node[inner sep=2pt, circle] (4) at (0,4) [draw] {};
\node[inner sep=2pt, circle] (5) at (3,4) [draw] {};

\path[style=semithick] (0) edge node[anchor=north]{{$5$}}(1);
\path[style=semithick] (2) edge node[anchor=north]{{$8$}}(3);
\path[style=semithick] (4) edge node[anchor=south]{{$3$}}(5);

\node[anchor = north]  at (0) {{$0$}};
\node[anchor = north]  at (1) {{$4$}};
\node[anchor = east]  at (2) {{$7$}};
\node[anchor = west]  at (3) {{$6$}};
\node[anchor = south]  at (4) {{$1$}};
\node[anchor = south]  at (5) {{$2$}};

\end{tikzpicture}
$$

Now $9$ was connected to $6$ and $0$, and the maximum value on each of their trees is $8$ and $5$, respectively, so these will be the labels of the two new nodes above $9$.  To see which one shares the direction with $9$, we see that the tree with the vertex $0$ has minimum value, so $5$ shares the same direction as $9$.  We then obtain

$$
\begin{tikzpicture}

\node[inner sep=2pt, circle] (3) at (6,1) [draw] {};
\node[inner sep=2pt, circle] (5) at (1,4) [draw] {};
\node[inner sep=2pt, circle] (8) at (4,3) [draw] {};
\node[inner sep=2pt, circle] (9) at (3,2) [draw] {};
\node[inner sep=2pt, circle] (10) at (5,0) [draw] {};

\draw[-]  (5)--(9) node[midway, below] {};
\draw[-]  (8)--(9) node[midway, below] {};

\draw[-]  (3)--(10) node[midway, below] {};
\draw[-]  (10)--(9) node[midway, below] {};

\node[anchor = north]  at (3) {{$3R$}};

\node[anchor = north]  at (5) {{$5L$}};
\node[anchor = north]  at (8) {{$8R$}};
\node[anchor = north]  at (9) {{$9L$}};
\node[anchor = north]  at (10) {{$10L$}};
\end{tikzpicture}
$$

Now in $T_{8-\ep}$, $8$ was connected to the isolated vertex $7$ and isolated vertex $6$.  Hence the two new nodes connected to $8$ will be $6$ and $7$.  Since $6<7$, $6$ and $8$ share the same direction yielding

$$
\begin{tikzpicture}

\node[inner sep=2pt, circle] (6) at (5,4) [draw] {};
\node[inner sep=2pt, circle] (3) at (6,1) [draw] {};
\node[inner sep=2pt, circle] (5) at (1,4) [draw] {};
\node[inner sep=2pt, circle] (8) at (4,3) [draw] {};
\node[inner sep=2pt, circle] (9) at (3,2) [draw] {};
\node[inner sep=2pt, circle] (10) at (5,0) [draw] {};
\node[inner sep=2pt, circle] (7) at (3,4) [draw] {};

\draw[-]  (5)--(9) node[midway, below] {};
\draw[-]  (8)--(9) node[midway, below] {};

\draw[-]  (3)--(10) node[midway, below] {};
\draw[-]  (10)--(9) node[midway, below] {};
\draw[-]  (7)--(8) node[midway, below] {};
\draw[-]  (6)--(8) node[midway, below] {};

\node[anchor = south]  at (6) {{$6R$}};
\node[anchor = south]  at (7) {{$7L$}};
\node[anchor = north]  at (3) {{$3R$}};

\node[anchor = north]  at (5) {{$5L$}};
\node[anchor = north]  at (8) {{$8R$}};
\node[anchor = north]  at (9) {{$9L$}};
\node[anchor = north]  at (10) {{$10L$}};
\end{tikzpicture}
$$

The next critical edge value is $5$, so we consider the level subcomplex $T_{5-\ep}$:
$$
\begin{tikzpicture}[scale=.8]

\node[inner sep=2pt, circle] (0) at (0,0) [draw] {};
\node[inner sep=2pt, circle] (1) at (3,0) [draw] {};
\node[inner sep=2pt, circle] (4) at (0,4) [draw] {};
\node[inner sep=2pt, circle] (5) at (3,4) [draw] {};

\path[style=semithick] (4) edge node[anchor=north]{{$3$}}(5);

\node[anchor = north]  at (0) {{$0$}};
\node[anchor = north]  at (1) {{$4$}};
\node[anchor = south]  at (4) {{$1$}};
\node[anchor = south]  at (5) {{$2$}};

\end{tikzpicture}
$$

The edge $5$ was connected to isolated vertices $0$ and $4$, yielding
$$
\begin{tikzpicture}

\node[inner sep=2pt, circle] (0) at (0,5) [draw] {};
\node[inner sep=2pt, circle] (3) at (6,1) [draw] {};
\node[inner sep=2pt, circle] (4) at (2,5) [draw] {};
\node[inner sep=2pt, circle] (5) at (1,4) [draw] {};
\node[inner sep=2pt, circle] (6) at (5,4) [draw] {};
\node[inner sep=2pt, circle] (7) at (3,4) [draw] {};
\node[inner sep=2pt, circle] (8) at (4,3) [draw] {};
\node[inner sep=2pt, circle] (9) at (3,2) [draw] {};
\node[inner sep=2pt, circle] (10) at (5,0) [draw] {};

\draw[-]  (0)--(5) node[midway, below] {};
\draw[-]  (4)--(5) node[midway, below] {};
\draw[-]  (5)--(9) node[midway, below] {};
\draw[-]  (7)--(8) node[midway, below] {};
\draw[-]  (6)--(8) node[midway, below] {};
\draw[-]  (8)--(9) node[midway, below] {};
\draw[-]  (3)--(10) node[midway, below] {};
\draw[-]  (10)--(9) node[midway, below] {};

\node[anchor = north]  at (0) {{$0L$}};
\node[anchor = north]  at (3) {{$3R$}};
\node[anchor = north]  at (4) {{$4R$}};
\node[anchor = north]  at (5) {{$5L$}};
\node[anchor = north]  at (6) {{$6R$}};
\node[anchor = north]  at (7) {{$7L$}};
\node[anchor = north]  at (8) {{$8R$}};
\node[anchor = north]  at (9) {{$9L$}};
\node[anchor = north]  at (10) {{$10L$}};
\end{tikzpicture}
$$

Finally, $3$ is connected to $2$ and $1$, giving us the merge tree induced by the discrete Morse function:

$$
\begin{tikzpicture}

\node[inner sep=2pt, circle] (0) at (0,5) [draw] {};
\node[inner sep=2pt, circle] (1) at (7,2) [draw] {};
\node[inner sep=2pt, circle] (2) at (5,2) [draw] {};
\node[inner sep=2pt, circle] (3) at (6,1) [draw] {};
\node[inner sep=2pt, circle] (4) at (2,5) [draw] {};
\node[inner sep=2pt, circle] (5) at (1,4) [draw] {};
\node[inner sep=2pt, circle] (6) at (5,4) [draw] {};
\node[inner sep=2pt, circle] (7) at (3,4) [draw] {};
\node[inner sep=2pt, circle] (8) at (4,3) [draw] {};
\node[inner sep=2pt, circle] (9) at (3,2) [draw] {};
\node[inner sep=2pt, circle] (10) at (5,0) [draw] {};

\draw[-]  (0)--(5) node[midway, below] {};
\draw[-]  (4)--(5) node[midway, below] {};
\draw[-]  (5)--(9) node[midway, below] {};
\draw[-]  (7)--(8) node[midway, below] {};
\draw[-]  (6)--(8) node[midway, below] {};
\draw[-]  (8)--(9) node[midway, below] {};
\draw[-]  (2)--(3) node[midway, below] {};
\draw[-]  (1)--(3) node[midway, below] {};
\draw[-]  (3)--(10) node[midway, below] {};
\draw[-]  (10)--(9) node[midway, below] {};

\node[anchor = south]  at (0) {{$0$}};
\node[anchor = south]  at (1) {{$1$}};
\node[anchor = south]  at (2) {{$2$}};
\node[anchor = north]  at (3) {{$3$}};
\node[anchor = south]  at (4) {{$4$}};
\node[anchor = north]  at (5) {{$5$}};
\node[anchor = south]  at (6) {{$6$}};
\node[anchor = south]  at (7) {{$7$}};
\node[anchor = north]  at (8) {{$8$}};
\node[anchor = north]  at (9) {{$9$}};
\node[anchor = north]  at (10) {{$10$}};
\end{tikzpicture}
$$

\end{ex}

\begin{definition}\label{def: merege equiv} Two discrete Morse functions $f,g\colon T \to \RR$ are \textbf{merge equivalent} if they induce the same unlabeled binary tree; that is, if there is there is a rooted graph isomorphism $\phi\colon M_f \to M_g$ such that node $v$ is a left (right) child of node $u$ if and only if node $\phi(v)$ is a left (right) child of node $\phi(u)$.
\end{definition}

We will compare this notion of equivalence of discrete Morse functions with other notions of equivalence of discrete Morse functions in Section \ref{Comparison with other notions of equivalence}.

In addition, our main goal of Section \ref{Merge tree of a star graph} will be to count the number of merge equivalent discrete Morse functions on a star graph.

\begin{rem} Definition \ref{def: merege equiv} defines two merge trees to be equivalent if they share the same tree structure, ignoring the lifespan of a component and the order in which components were born and died. One could define a notion of equivalence that takes this order into account, thereby defining a chiral merge tree.  This was defined and studied in the smooth setting by Curry \cite{Curry-2017}.  We pose this adaption in the discrete setting as an open problem in Section \ref{Conclusion and future work}.

\end{rem}

\subsection{Relation to Matching number}

Recall that a \textbf{matching} in a graph is a set of edges such that no two edges share a common vertex. A matching is said to be \textbf{maximum} if it is a matching that contains the largest possible number of edges.  The \textbf{matching number} of $G$, denoted $\nu(G)$, is the size of a maximum matching.  We give a relationship between the matching number of a tree $T$ and the induced merge tree of any discrete Morse function on $T$ in Proposition \ref{prop: impasse matching}.  First a definition.

\begin{defn}
Let $M$ be a merge tree. An internal node of $M$ that is adjacent to exactly two leaves is called an \textbf{impasse}. The value $i(M)$ is the number of impasses of $M$.
\end{defn}

\begin{lemma}\label{lem: one impasse}
Every merge tree with more than one vertex has at least one impasse.  That is, $i(M) \ge 1$ for every merge tree $M$.
\end{lemma}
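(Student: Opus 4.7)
The plan is to produce an impasse by exhibiting an internal node of maximum depth. First, I note that since $M$ has more than one node and is a full binary tree, the root has two children, so the set of internal nodes is nonempty. Being a finite set, it contains a node $n$ at maximum distance from the root.

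Next I would verify that $n$ is an impasse. Since $n$ is internal in a full binary tree, it has exactly two children $a$ and $b$. If either child were itself internal, it would lie strictly further from the root than $n$, contradicting the maximality in the choice of $n$. Hence both $a$ and $b$ are leaves. It remains to ensure that no other neighbor of $n$ is a leaf, so that $n$ really is \emph{adjacent to exactly two leaves}. The only other possible neighbor of $n$ is its parent (if $n$ is not the root); but in a full binary tree any parent automatically has two children, so it has at least two neighbors and therefore cannot be a leaf. So $n$ has exactly two leaf neighbors, confirming that $n$ is an impasse.

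The argument has essentially no obstacle: the only thing to be careful about is the bookkeeping in the definition of impasse, namely checking that an internal non-root $n$ does not accidentally acquire a third leaf neighbor through its parent. This is immediate from the fullness hypothesis, and the rest of the proof is just a maximum-depth extraction in a finite tree. An alternative but essentially equivalent route would be induction on $|V(M)|$: the base case of three nodes (root and its two leaf children) is trivial, and in the inductive step one prunes any impasse-free subtree to reduce to a smaller full binary tree — but the depth argument is shorter and avoids formalizing the pruning step.
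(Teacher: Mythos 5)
Your proof is correct and is essentially the paper's argument in contrapositive form: the paper supposes no impasse exists and derives an unending chain of internal children, while you directly extract an internal node of maximum depth --- both hinge on finiteness in exactly the same way. If anything, you are slightly more careful than the paper in verifying the ``adjacent to exactly two leaves'' clause (ruling out the parent as a third leaf neighbor via fullness), a point the paper's proof leaves implicit.
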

\begin{proof}
Suppose we have a merge tree $M$ without an impasse.  Therefore, all internal nodes of $M$ must have at least one internal node as a child.  Consequently, each of those internal nodes must now have an internal node as a child.  This continues on indefinitely, contradicting the fact that $M$ is finite.
\end{proof}

\begin{prop}\label{prop: impasse matching} Let $f\colon T \to \RR$ be discrete Morse function, $M$ the induced merge tree of $f$.  Then the set of edges of $T$ corresponding to the set of impasses of $M$ form a matching of $T$.  In particular, $i(M)\leq \nu(T)$.
\end{prop}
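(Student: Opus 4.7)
My approach is to argue by contradiction: assume two impasse edges $e_1, e_2$ of $T$ share a common vertex $v$, label them so that $c_1 := f(e_1) < c_2 := f(e_2)$, and derive an impossibility from the bottom-up construction of $M$.

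The first step is to translate the definition of an impasse back into the language of $T$. By Theorem \ref{thm: construct mereg tree}, each internal node of $M$ is created from a critical edge of $T$ while each leaf is created from a critical vertex: a child of some $n_{c_i}$ is processed recursively (becoming internal) precisely when its assigned value is a critical edge value. Therefore an impasse corresponds to a critical edge $e$ with value $c$ such that for each endpoint $w$ of $e$, the maximum $f$-value over critical simplices of the component $T_{c-\ep}[w]$ is attained by a vertex rather than an edge.

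The technical heart of the argument is the following lemma: whenever a connected component $A$ of a level subcomplex $T_{c-\ep}$ contains a critical edge, the maximum $f$-value among critical simplices of $A$ must be attained by an edge. I would prove this by assuming for contradiction that the maximum is attained by a critical vertex $x$ while $A$ contains a critical edge $e$, which forces $f(x) > f(e)$ since these are distinct critical simplices. Consider the unique path $x = x_0, x_1, \ldots, x_k = u$ in $A$ from $x$ to an endpoint $u$ of $e$. I would argue inductively that each traversed edge $x_i x_{i+1}$ is non-critical and paired (in the discrete Morse sense) with $x_{i+1}$: the weakly increasing condition gives $f(x_i x_{i+1}) \ge f(x_i)$, and equality is ruled out by the 2-1 condition (either because $x_0 = x$ is critical, or because for $i \ge 1$ the vertex $x_i$ is already paired with the previous edge $x_{i-1} x_i$, so a third simplex with the same value is forbidden). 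If $x_i x_{i+1}$ were critical its value would then exceed $f(x_i) \ge f(x)$, contradicting maximality; so $x_i x_{i+1}$ is non-critical and paired with a vertex endpoint, and the same 2-1 reasoning forces that endpoint to be $x_{i+1}$. Iterating yields $f(u) = f(x_k) > f(x_{k-1}) > \cdots > f(x_0) = f(x) > f(e)$, which together with $f(u) \le f(e)$ (weakly increasing, $u \subseteq e$) gives the desired contradiction.

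With the lemma in hand, the proposition follows quickly. Since $c_1 < c_2$, the edge $e_1$ belongs to $T_{c_2-\ep}$, and incidence with $v$ places it in the component $T_{c_2-\ep}[v]$. The lemma applied to this component forces its maximum critical $f$-value to be attained by an edge, so the child of $n_{e_2}$ corresponding to $v$ is internal rather than a leaf, contradicting that $e_2$ is an impasse. The main obstacle is the lemma, whose proof requires careful bookkeeping of the alternating discrete Morse pairing along the path and simultaneous use of the 2-1 and weakly increasing conditions.
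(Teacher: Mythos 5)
Your proof is correct, but it takes a genuinely different (and more careful) route than the paper's. The paper also assumes two impasse edges $e_x=uv$ and $e_y=uw$ share a vertex, but it derives the contradiction entirely inside $M$: it asserts that the two leaf children of the impasse for $uv$ are the nodes ``corresponding to $u$ and $v$,'' so that a single leaf $n_u$ would be adjacent to both impasses, contradicting that a leaf has one neighbor. That is a one-line degree argument, but it leans on an identification the construction of Theorem \ref{thm: construct mereg tree} does not literally supply: the child of $n_{e_y}$ on the $u$-side is labeled by the \emph{maximum} critical value of the component $T_{f(e_y)-\ep}[u]$, which need not be $f(u)$, and the $u$-side children of the two impasses are maxima over two different components, hence a priori different nodes of $M$. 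Your argument instead works in $T$ via the lemma that a component of a level subcomplex containing a critical edge has its critical maximum attained by an edge; applied to $T_{c_2-\ep}[v]$, which contains the lower impasse edge $e_1$, this forces the $v$-side child of $n_{e_2}$ to be an internal node, so $n_{e_2}$ is not an impasse. The alternating-pairing walk you use to prove the lemma is sound: the weakly increasing and $2$--$1$ conditions together force each path edge to be matched with its far endpoint, driving the values strictly upward until they collide with $f(u)\le f(e)$. The cost of your approach is the extra lemma and its bookkeeping; the benefit is that it supplies exactly the structural fact the paper's quick argument implicitly relies on, so every step is justified from the definitions.
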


\begin{proof} Let $x,y$ be two impasses of $M$.  In particular, $x,y$ are not leaves and correspond to edges $e_x,e_y$, respectively, in $T$. We must show that $e_x$ and $e_y$ do not share a vertex. Suppose by contradiction that $e_x=uv$ and $e_y=uw$. Then the two leaves of $x$ must be nodes corresponding to $u$ and $v$, say $n_u$ and $n_v$.  Likewise for $y$. But then $n_u$ is adjacent to both $x$ and $y$, contradicting the fact that $n_u$ is a leaf.  Thus $e_x$ and $e_y$ do not share a vertex in common.  It follows that the corresponding set of edges forms a matching, hence $i(M)\leq \nu(T)$.
\end{proof}

\begin{example}\label{example impasse matching} The inequality in Proposition \ref{prop: impasse matching} can be strict.  Indeed, consider the following tree $T$ with discrete Morse function $f$

$$
\begin{tikzpicture}[scale=1]

\node[inner sep=2pt, circle] (0) at (2,2) [draw] {};
\node[inner sep=2pt, circle] (1) at (0,0) [draw] {};
\node[inner sep=2pt, circle] (2) at (0,2) [draw] {};
\node[inner sep=2pt, circle] (3) at (2,0) [draw] {};

\path[style=semithick] (0) edge node[anchor=east]{{$5$}}(3);
\path[style=semithick] (1) edge node[anchor=south]{{$4$}}(3);
\path[style=semithick] (2) edge node[anchor=west]{{$6$}}(1);

\node[anchor = south]  at (0) {{$0$}};
\node[anchor = north]  at (1) {{$1$}};
\node[anchor = south]  at (2) {{$2$}};
\node[anchor = north]  at (3) {{$3$}};

\end{tikzpicture}
$$

Then $f$ induces the merge tree $M$ given by

$$
\begin{tikzpicture}

\node[inner sep=2pt, circle] (6) at (0,0) [draw] {};
\node[inner sep=2pt, circle] (5) at (-1,1) [draw] {};
\node[inner sep=2pt, circle] (2) at (1,1) [draw] {};
\node[inner sep=2pt, circle] (0) at (-2,2) [draw] {};
\node[inner sep=2pt, circle] (4) at (0,2) [draw] {};
\node[inner sep=2pt, circle] (1) at (-1,3) [draw] {};
\node[inner sep=2pt, circle] (3) at (1,3) [draw] {};

\draw[-]  (6)--(5) node[midway, below] {};
\draw[-]  (6)--(2) node[midway, below] {};
\draw[-]  (0)--(5) node[midway, below] {};
\draw[-]  (4)--(5) node[midway, below] {};
\draw[-]  (4)--(1) node[midway, below] {};
\draw[-]  (4)--(3) node[midway, below] {};
\end{tikzpicture}
$$

But clearly $i(M)=1<2=\nu(T)$.

\end{example}

\section{Comparison with other notions of equivalence}\label{Comparison with other notions of equivalence}

There are several other notions of equivalence of discrete Morse functions in the literature.  In this section, we compare merge equivalence with these other notions.

\subsection{Forman equivalence}

\begin{definition}\label{defn gvf} Let $f$ be a discrete Morse function on $G$.  The  \textbf{induced gradient vector field} $V_f$ is defined by
$$V_f:=\{(\sigma^{(p)}, \tau^{(p+1)}) : \sigma< \tau, f(\sigma)\geq f(\tau)\}.
$$
\end{definition}
Recall that two discrete Morse functions $f,g\colon G \to \RR$ are \textbf{Forman equivalent} if and only if $V_f=V_g$ \cite{Ayala09}. It is easy to see that neither Forman equivalence nor merge equivalence implies the other.

\begin{ex}\label{ex forman}
Suppose we have the following discrete Morse functions:
$$
\begin{tikzpicture}
\node[inner sep=2pt, circle] (0) at (-1,0) [draw] {};
\node[inner sep=2pt, circle] (1) at (0,0) [draw] {};
\node[inner sep=2pt, circle] (2) at (1,0) [draw] {};

\path[style=semithick] (0) edge node[anchor=south]{{$3$}}(1);
\path[style=semithick] (1) edge node[anchor=south]{{$4$}}(2);

\node[anchor = south]  at (0) {{$0$}};
\node[anchor = south]  at (1) {{$1$}};
\node[anchor = south]  at (2) {{$2$}};
\end{tikzpicture}$$
$$
\begin{tikzpicture}
\node[inner sep=2pt, circle] (0) at (-1,0) [draw] {};
\node[inner sep=2pt, circle] (1) at (0,0) [draw] {};
\node[inner sep=2pt, circle] (2) at (1,0) [draw] {};

\path[style=semithick] (0) edge node[anchor=south]{{$4$}}(1);
\path[style=semithick] (1) edge node[anchor=south]{{$3$}}(2);

\node[anchor = south]  at (0) {{$0$}};
\node[anchor = south]  at (1) {{$1$}};
\node[anchor = south]  at (2) {{$2$}};
\end{tikzpicture}
$$
All simplices for both functions are critical, and hence the gradient vector field induced by both these functions has no arrows.  Thus these functions are Forman equivalent. However, the merge trees are given by
$$
\begin{tikzpicture}

\node[inner sep=2pt, circle] (0) at (0,2) [draw] {};
\node[inner sep=2pt, circle] (1) at (2,2) [draw] {};
\node[inner sep=2pt, circle] (2) at (3,1) [draw] {};
\node[inner sep=2pt, circle] (3) at (1,1) [draw] {};
\node[inner sep=2pt, circle] (4) at (2,0) [draw] {};

\draw[-]  (0)--(3) node[midway, below] {};
\draw[-]  (3)--(1) node[midway, below] {};
\draw[-]  (3)--(4) node[midway, below] {};
\draw[-]  (2)--(4) node[midway, below] {};

\end{tikzpicture}
$$
and
$$
\begin{tikzpicture}

\node[inner sep=2pt, circle] (0) at (0,1) [draw] {};
\node[inner sep=2pt, circle] (1) at (1,2) [draw] {};
\node[inner sep=2pt, circle] (2) at (3,2) [draw] {};
\node[inner sep=2pt, circle] (3) at (2,1) [draw] {};
\node[inner sep=2pt, circle] (4) at (1,0) [draw] {};

\draw[-]  (0)--(4) node[midway, below] {};
\draw[-]  (3)--(1) node[midway, below] {};
\draw[-]  (3)--(2) node[midway, below] {};
\draw[-]  (3)--(4) node[midway, below] {};,

\end{tikzpicture}
$$
respectively.  Thus they are not merge equivalent.









\end{ex}

\subsection{Homological equivalence}

Given a graph with a discrete Morse function, one may study the Betti numbers of the level subcomplexes induced by the critical values.  This gives rise to a non-negative sequence of integers.  Such a sequence is a \textbf{homological sequence} and two discrete Morse functions are \textbf{homologically equivalent} if they induce the same homological sequence.  See \cite{A-F-F-V-09, A-F-V-11,Ayala10,AGOSR}.



\begin{ex}\label{ex hom}

We show that homological equivalence and merge equivalence do not imply each other.  First, consider the first two discrete Morse functions from Example \ref{ex forman}.  It is easy to see that they both induce the homological sequence $1,2,3,2,1$, hence they are homologically equivalent.  However, their merge trees were shown in that same example to be different, hence they are not merge equivalent.

Now suppose we have the following functions

$$
\begin{tikzpicture}
\node[inner sep=2pt, circle] (0) at (-1,0) [draw] {};
\node[inner sep=2pt, circle] (1) at (0,0) [draw] {};
\node[inner sep=2pt, circle] (2) at (1,0) [draw] {};

\path[style=semithick] (0) edge node[anchor=south]{{$3$}}(1);
\path[style=semithick] (1) edge node[anchor=south]{{$4$}}(2);

\node[anchor = south]  at (0) {{$0$}};
\node[anchor = south]  at (1) {{$1$}};
\node[anchor = south]  at (2) {{$2$}};
\end{tikzpicture}
$$

$$
\begin{tikzpicture}
\node[inner sep=2pt, circle] (0) at (-1,0) [draw] {};
\node[inner sep=2pt, circle] (1) at (0,0) [draw] {};
\node[inner sep=2pt, circle] (2) at (1,0) [draw] {};

\path[style=semithick] (0) edge node[anchor=south]{{$2$}}(1);
\path[style=semithick] (1) edge node[anchor=south]{{$4$}}(2);

\node[anchor = south]  at (0) {{$0$}};
\node[anchor = south]  at (1) {{$1$}};
\node[anchor = south]  at (2) {{$3$}};
\end{tikzpicture}
$$

The homological sequence for these functions is given by $1,2,3,2,1$ and $1,2,1,2,1$, respectively so that they are not homologically equivalent. But they both induce the merge tree
$$
\begin{tikzpicture}

\node[inner sep=2pt, circle] (0) at (0,2) [draw] {};
\node[inner sep=2pt, circle] (1) at (2,2) [draw] {};
\node[inner sep=2pt, circle] (2) at (3,1) [draw] {};
\node[inner sep=2pt, circle] (3) at (1,1) [draw] {};
\node[inner sep=2pt, circle] (4) at (2,0) [draw] {};

\draw[-]  (0)--(3) node[midway, below] {};
\draw[-]  (3)--(1) node[midway, below] {};
\draw[-]  (3)--(4) node[midway, below] {};
\draw[-]  (2)--(4) node[midway, below] {};

\end{tikzpicture}
$$
so that they are merge equivalent.
\end{ex}

\subsection{Persistence equivalence}

Another notion of equivalence of discrete  Morse functions, closely related to merge equivalence, is persistence equivalence.  Two discrete Morse functions are \textbf{persistent equivalent} if they induce the same persistence diagram. See \cite{LiuSco-18} for more details.

\begin{ex}

Suppose we have the following discrete Morse functions:
$$
\begin{tikzpicture}
\node[inner sep=2pt, circle] (0) at (-1,0) [draw] {};
\node[inner sep=2pt, circle] (1) at (0,0) [draw] {};
\node[inner sep=2pt, circle] (2) at (1,0) [draw] {};

\path[style=semithick] (0) edge node[anchor=south]{{$4$}}(1);
\path[style=semithick] (1) edge node[anchor=south]{{$3$}}(2);

\node[anchor = south]  at (0) {{$0$}};
\node[anchor = south]  at (1) {{$1$}};
\node[anchor = south]  at (2) {{$2$}};
\end{tikzpicture}
$$
$$
\begin{tikzpicture}
\node[inner sep=2pt, circle] (0) at (0,0) [draw] {};
\node[inner sep=2pt, circle] (1) at (-1,0) [draw] {};
\node[inner sep=2pt, circle] (2) at (1,0) [draw] {};

\path[style=semithick] (0) edge node[anchor=south]{{$4$}}(1);
\path[style=semithick] (0) edge node[anchor=south]{{$3$}}(2);

\node[anchor = south]  at (0) {{$0$}};
\node[anchor = south]  at (1) {{$1$}};
\node[anchor = south]  at (2) {{$2$}};
\end{tikzpicture}
$$

These functions both create the same persistence diagram,
$$
\begin{tikzpicture}[scale=.5][thick]
    \draw (1,1) -- (9,1);
    \draw (1,1) -- (1,8);
    \draw[thin, dashed] (3,1) -- (3,8);
    \draw[thin, dashed] (5,1) -- (5,8);
    \draw[thin, dashed] (7,1) -- (7,8);
    \draw[thin, dashed] (9,1) -- (9,8);

    \node at (1,.5) {$0$};
    \node at (3,.5) {$1$};
    \node at (5,.5) {$2$};
    \node at (7,.5) {$3$};
    \node at (9,.5) {$4$};

    \node[black] at (-.2,5) {$b_0$};

    \draw[black, ultra thick, ->] (1,3) -- (9,3);
    \draw[black, ultra thick] (3,5) -- (9,5);
    \draw[black, ultra thick] (5,7) -- (7,7);

\end{tikzpicture}
$$
but different merge trees.

The same functions in Example \ref{ex hom} which show that merge equivalence does not imply homological equivalence also shows that merge equivalence does not imply persistence equivalence.
\end{ex}

\section{Merge tree of a star graph}\label{Merge tree of a star graph}

\begin{defn} Let $n\geq 2$ be an integer. The \textbf{star graph on $n$ vertices} is defined by $S_n= K_{1,n-1}$ (\cite[p. 17]{H-69}).  We call the unique vertex $c\in S_n$ of degree $n-1$ the \textbf{center of $S_n$} or \textbf{center vertex}.
\end{defn}

In this section, we consider discrete Morse functions on a star graph with every simplex critical.  We will call such a function a \textbf{critical} discrete Morse function.  We first define the kind of merge tree that, it turns out, can be induced by a critical discrete Morse function on a star graph.

\begin{defn} A merge tree $M$ is called \textbf{thin} if $i(M)=1$, i.e, $M$ has a unique impasse.
\end{defn}

Observe that a thin merge tree can be characterized by the fact that it has a unique path from the root node to the unique impasse with the property that every edge not on the path and incident with the path is part of a leaf.  We are thus able to determine a unique thin merge tree through a sequence of Ls and Rs where the L or R is specifying the next child to travel to, i.e., the direction that this path takes.  We make this notion precise in the following definition.

\begin{definition}\label{def: unique path notation} Let $M$ be a thin merge tree with $n$ leaves, and let $P_M$ denote the unique path $r=m_1, m_2, \ldots, m_{n-1}$ from the root node $r$ to the unique impasse $m_{n-1}$.  Define a function $d_M=d\colon\{m_1, \ldots, m_{n-2}\}\to \{L,R\}$ by $d(m_i)=L$ ( or $R)$ if $m_{i+1}$ is the left (or right) child of $m_i$, $1\leq i\leq n-2$. The \textbf{LR sequence} of $M$, denoted $D_M=D$, is the sequence $D\colon \{0,1,2,3, \ldots, n-2\}\to \{L,R\}$ by $D(0)=L$ and $D_M(i):=d(m_i)$ for all $1\leq i \leq n-2$.
\end{definition}

The choice that $D(0)=L$ is arbitrary, but chosen to be consistent with the construction in Theorem \ref{thm: construct mereg tree}. We will also need this in the proof of Proposition \ref{prop: dmf from thin} to determine when the sequence $D$ switches direction.

\begin{ex} We illustrate the terms and notation in Definition \ref{def: unique path notation}.  Consider the merge tree $M$

$$
\begin{tikzpicture}

\node[inner sep=2pt, circle] (0) at (1,9) [draw] {};
\node[inner sep=2pt, circle] (1) at (3,9) [draw] {};
\node[inner sep=2pt, circle] (2) at (2,8) [draw] {};
\node[inner sep=2pt, circle] (3) at (4,8) [draw] {};
\node[inner sep=2pt, circle] (4) at (3,7) [draw] {};
\node[inner sep=2pt, circle] (5) at (1,7) [draw] {};
\node[inner sep=2pt, circle] (6) at (2,6) [draw] {};
\node[inner sep=2pt, circle] (7) at (0,6) [draw] {};
\node[inner sep=2pt, circle] (8) at (1,5) [draw] {};
\node[inner sep=2pt, circle] (9) at (3,5) [draw] {};
\node[inner sep=2pt, circle] (10) at (2,4) [draw] {};
\node[inner sep=2pt, circle] (11) at (0,4) [draw] {};
\node[inner sep=2pt, circle] (12) at (1,3) [draw] {};
\node[inner sep=2pt, circle] (13) at (3,3) [draw] {};
\node[inner sep=2pt, circle] (14) at (2,2) [draw] {};
\node[inner sep=2pt, circle] (15) at (4,2) [draw] {};
\node[inner sep=2pt, circle] (16) at (3,1) [draw] {};

\draw[-]  (0)--(2) node[midway, below] {};
\draw[-]  (1)--(2) node[midway, below] {};
\draw[-]  (2)--(4) node[midway, below] {};
\draw[-]  (3)--(4) node[midway, below] {};
\draw[-]  (5)--(6) node[midway, below] {};
\draw[-]  (6)--(4) node[midway, below] {};
\draw[-]  (7)--(8) node[midway, below] {};
\draw[-]  (6)--(8) node[midway, below] {};
\draw[-]  (8)--(10) node[midway, below] {};
\draw[-]  (10)--(9) node[midway, below] {};
\draw[-]  (11)--(12) node[midway, below] {};
\draw[-]  (12)--(10) node[midway, below] {};
\draw[-]  (12)--(14) node[midway, below] {};
\draw[-]  (13)--(14) node[midway, below] {};
\draw[-]  (14)--(16) node[midway, below] {};
\draw[-]  (15)--(16) node[midway, below] {};

\node[anchor = north]  at (16) {{$m_1$}};
\node[anchor = east]  at (14) {{$m_2$}};
\node[anchor = east]  at (12) {{$m_3$}};
\node[anchor = west]  at (10) {{$m_4$}};
\node[anchor = east]  at (8) {{$m_5$}};
\node[anchor = west]  at (6) {{$m_6$}};
\node[anchor = west]  at (4) {{$m_7$}};
\node[anchor = west]  at (2) {{$m_8$}};

\end{tikzpicture}
$$

Then $d(m_1)=d(m_2)=d(m_4)=d(m_7)=L$ while $d(m_3)=d(m_5)=d(m_6)=R$.
We then have that the LR sequence of $M$ is (L)LLRLRRL. Note that the $0$ term of the sequence is always L, which we will put in parentheses when it is included.
\end{ex}

By choosing a direction to travel, we easily count all thin merge trees with $n$ internal nodes.

\begin{prop}\label{prop: counting thin trees}
There are exactly $2^{n-1}$ thin merge trees with $n$ internal nodes.
\end{prop}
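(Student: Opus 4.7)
The plan is to establish a bijection between thin merge trees with $n$ internal nodes and the set of binary strings $\{L,R\}^{n-1}$ via the LR sequence from Definition \ref{def: unique path notation}, and then just count the strings.

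First I would verify that a thin merge tree $M$ with $n$ internal nodes has its internal nodes precisely the nodes $m_1, m_2, \ldots, m_n$ lying along the unique path $P_M$ from the root $m_1$ to the impasse $m_n$. By the characterization of thin trees noted just before Definition \ref{def: unique path notation}, every off-path child of a node on $P_M$ is a leaf, so every internal node of $M$ must lie on $P_M$. Since $M$ is a full binary tree, having $n$ internal nodes means it has $n+1$ leaves, and this decomposition accounts for them all: each of $m_1, \ldots, m_{n-1}$ contributes one leaf as its off-path child, while $m_n$ contributes two. Consequently the LR data $(D_M(1), \ldots, D_M(n-1))$ is a well-defined element of $\{L,R\}^{n-1}$ (recall that $D_M(0)=L$ is fixed by convention).

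For the inverse direction, given a string $(s_1, \ldots, s_{n-1}) \in \{L,R\}^{n-1}$, I would build a thin merge tree by creating internal nodes $m_1, \ldots, m_n$, declaring $m_{i+1}$ to be the $s_i$-child of $m_i$ and attaching a single leaf as the opposite-direction child of $m_i$ for each $1 \le i \le n-1$, and finally attaching two leaves as the children of $m_n$. By construction the result is a full binary tree with $n$ internal nodes in which $m_n$ is an impasse, while each $m_i$ with $i<n$ has $m_{i+1}$ as an internal child and so is not an impasse; hence the tree is thin with $P_M = m_1, \ldots, m_n$, and its LR sequence recovers $(s_1, \ldots, s_{n-1})$.

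These two constructions are mutually inverse, giving the desired bijection; therefore the number of thin merge trees with $n$ internal nodes equals $|\{L,R\}^{n-1}| = 2^{n-1}$. The only genuinely substantive step is the first one, confirming that every internal node of a thin tree lies on the path $P_M$ (and hence that $P_M$ has exactly $n$ nodes); the rest is direct checking that the two constructions invert one another.
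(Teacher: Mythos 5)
Your proposal is correct and takes essentially the same approach as the paper: the paper counts the $n-1$ left-or-right choices made at each internal node along the root-to-impasse path (every node except the impasse), which is exactly the LR-sequence data you package into an explicit bijection with $\{L,R\}^{n-1}$. Your version is somewhat more careful in that it verifies all internal nodes lie on $P_M$ and exhibits the inverse construction, but the underlying argument is the same.
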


\begin{proof}
We will count the number of ways to construct a thin merge tree with $n$ internal nodes.   Beginning with the root node, we will choose which of its neighbors, left or right, is the next internal node, forcing the other node to be a leaf, since by definition, a thin tree has only one internal node incident two leaves with all other internal nodes being incident to one leaf and one internal node.  At each internal node, we choose the next internal node to be on the left or the right.  This choice is made for every internal node except for the very last internal node, which ends with the node adjacent to two leaves.  Since this choice is made for all internal nodes except the impasse, there are exactly $2^{n-1}$ thin merge trees with $n$ internal nodes.
\end{proof}

The goal of the remainder of the section is to show that thin merge trees with $n+1$ leaves are in bijective correspondence with the merge equivalent discrete Morse functions on $S_n$.  As promised, the next proposition tells us that the merge tree induced by a discrete Morse function on a star graph is always thin.

\begin{prop}\label{prop: star impass}
If $T=S_n$ is a star graph and $f\colon S_n\to \RR$ any discrete Morse function, then $i(M_f)= 1$.
\end{prop}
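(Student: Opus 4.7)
The plan is to trace through the construction of Theorem \ref{thm: construct mereg tree} and show that $M_f$ has a ``spine'' structure with a unique impasse at the bottom. Let $c$ denote the center vertex of $S_n$ and list the critical edges of $f$ in increasing order of value as $e_1,e_2,\ldots,e_k$; since every edge of $S_n$ is incident to $c$, we may write $e_j=cv_j$ with $v_j$ a non-central leaf of $S_n$. Observe first that each $v_j$ is itself a critical vertex, because its only incident edge is $e_j$ and $e_j$ being critical prevents $v_j$ from being paired with $e_j$.

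Next I would analyze the two child nodes of $n_{e_j}$ produced by the construction. For the $v_j$-side, the component $T_{f(e_j)-\ep}[v_j]$ equals $\{v_j\}$, since the only edge incident to $v_j$ in $S_n$ is $e_j$ and $e_j\notin T_{f(e_j)-\ep}$. Thus the child $n_{v_j}$ carries the label $f(v_j)$ and is a leaf of $M_f$. For the $c$-side, $T_{f(e_j)-\ep}[c]$ consists of $c$ together with every edge $cv$ of $S_n$ satisfying $f(cv)<f(e_j)$, along with the corresponding leaf $v$. When $j\ge 2$, this component contains the critical edge $e_{j-1}$, so the maximum critical value in it is at least $f(e_{j-1})$; moreover it cannot exceed $f(e_{j-1})$, because every critical vertex of the component has value strictly less than $f(e_{j-1})$ (indeed the center $c$, if critical, satisfies $f(c)<f(e_1)\le f(e_{j-1})$; a critical leaf $v_m$ with $m\le j-1$ satisfies $f(v_m)<f(e_m)\le f(e_{j-1})$; and a critical leaf paired via its edge with $c$ has value less than $f(c)$). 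Hence the other child of $n_{e_j}$ is the internal node $n_{e_{j-1}}$, so $n_{e_j}$ has exactly one leaf child and one internal child and is not an impasse.

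The case $j=1$ is different: the component $T_{f(e_1)-\ep}[c]$ contains no critical edge at all, because every critical edge of $f$ has value at least $f(e_1)$. Hence the maximum critical simplex in the component is a vertex, forcing the second child of $n_{e_1}$ also to be a leaf. So $n_{e_1}$ is an impasse, and by the analysis above it is the only one, giving $i(M_f)=1$. The main technical point requiring care is the maximality claim for $j\ge 2$, which relies on the star structure of $S_n$ to bound the values of all critical vertices in the component by $f(e_{j-1})$; once that is verified, the entire spine structure of $M_f$ is forced inductively by the construction.
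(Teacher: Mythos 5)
Your argument is correct, but it is a genuinely different (and much longer) route than the one the paper takes. The paper's proof is two lines: Proposition \ref{prop: impasse matching} gives the upper bound $i(M_f)\leq \nu(S_n)$, and $\nu(S_n)=1$ because every pair of edges of a star shares the center vertex; Lemma \ref{lem: one impasse} gives the lower bound $i(M_f)\geq 1$. You instead unwind the construction of Theorem \ref{thm: construct mereg tree} directly and verify that $M_f$ is a spine whose unique impasse sits at the node of the smallest critical edge $e_1$. Your key technical step --- that for $j\geq 2$ the maximum critical value in $T_{f(e_j)-\ep}[c]$ is exactly $f(e_{j-1})$, because every critical vertex in that component has value strictly below $f(e_{j-1})$ --- checks out, including the case of a leaf whose edge is matched with the center (there $f(v)<f(c)<f(e_1)$ since $f(c)\leq f(e_1)$ with equality excluded by criticality of $e_1$). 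What your approach buys is strictly more information: you essentially reprove that $M_f$ is thin \emph{and} locate its impasse and its entire internal path, which is the structural content the paper only needs later, in Proposition \ref{prop: dmf from thin}. What it costs is the case analysis over matched versus critical simplices, all of which the paper's matching argument sidesteps. One shared caveat: both proofs implicitly assume $f$ has at least one critical edge (as it does for the critical discrete Morse functions considered in this section); if no edge is critical, $M_f$ degenerates to a single node and $i(M_f)=0$.
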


\begin{proof} By Proposition \ref{prop: impasse matching}, $i(M_f)\leq \nu(S_n)=1$.  To see that we have equality, apply Lemma \ref{lem: one impasse}.
\end{proof}

Finally, given a thin merge tree with $n$ internal nodes, we need to construct a critical discrete Morse function on a star graph whose induced merge tree is the given thin merge tree.

\begin{prop}\label{prop: dmf from thin} Suppose $M$ is a thin merge tree.  Then there is a star graph $S_n$ and a  discrete Morse function $f\colon S_n\to M$ such that $M_f=M.$
\end{prop}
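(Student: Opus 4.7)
The plan is, given a thin merge tree $M$ with $n$ leaves, to exhibit a critical discrete Morse function $f$ on the star $S_n$ with $M_f = M$. Denote the path of internal nodes from the root to the impasse by $m_1,\ldots,m_{n-1}$, and write the LR sequence of $M$ as $D\colon\{0,1,\ldots,n-2\}\to\{L,R\}$. Label the edges of $S_n$ as $e_i=cw_i$, where $c$ is the center vertex and $w_1,\ldots,w_{n-1}$ are the leaf vertices, and set $f(e_i):=2n-1-i$, so that $f(e_1)>f(e_2)>\cdots>f(e_{n-1})$ and every edge value will exceed every vertex value assigned below.

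With this edge ordering, the inductive construction of Theorem \ref{thm: construct mereg tree} processes the critical edges in the order $e_1,e_2,\ldots,e_{n-1}$. At step $i$ the two components of $(S_n)_{f(e_i)-\ep}$ separated by $e_i$ are the singleton $\{w_i\}$ and the \emph{big} component containing $c$ together with $w_{i+1},\ldots,w_{n-1}$ and $e_{i+1},\ldots,e_{n-1}$. Because edge values dominate vertex values, the maximum critical value in the big component is $f(e_{i+1})$, so the child of $n_{e_i}$ for that component is the next internal node $n_{e_{i+1}}=m_{i+1}$; the other child is the leaf $n_{w_i}$ with value $f(w_i)$. Hence the internal path of $M_f$ coincides with $m_1,\ldots,m_{n-1}$ as an abstract rooted tree, and only the L/R labels along this path remain to be matched to $D$.

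The direction rule of Theorem \ref{thm: construct mereg tree} then reduces to a simple dichotomy: writing $\mu_i:=\min\{f(c),f(w_{i+1}),\ldots,f(w_{n-1})\}$, one has $D(i)=D(i-1)$ iff $f(w_i)>\mu_i$, and $D(i)\neq D(i-1)$ iff $f(w_i)<\mu_i$, for $1\le i\le n-2$. I therefore assign vertex values recursively from $i=n-1$ down to $i=1$: first pick any two distinct reals $f(c)$ and $f(w_{n-1})$, both less than $f(e_{n-1})$, arranging the smaller to agree with the impasse leaf of $M$ sharing the impasse's direction; then for $i=n-2,n-3,\ldots,1$ compute $\mu_i$ from the already-chosen values and pick $f(w_i)$ on the side of $\mu_i$ dictated by the dichotomy, keeping $f(w_i)$ distinct from all previously chosen values and still below $f(e_{n-1})$. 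Since the reals are dense, both sides of $\mu_i$ always admit such a choice avoiding any finite set.

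The hard part will be cleanly establishing the dichotomy above, since this is where the abstract direction data $D$ meets the analytic construction of $M_f$. Once it is in place, verifying that $f$ is a discrete Morse function is routine (all chosen values are pairwise distinct, every edge value exceeds every vertex value so $f$ is weakly increasing, and the map is $1$-$1$ so every simplex is critical), and the induction showing $M_f=M$ is a direct application of Theorem \ref{thm: construct mereg tree} proceeding down the path $e_1,e_2,\ldots,e_{n-1}$.
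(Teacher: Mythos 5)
Your proposal is correct, and its skeleton is the same as the paper's: you order the pendant edges so that their values decrease from the root end of the spine to the impasse end, which forces the internal nodes of $M_f$ to line up with $m_1,\ldots,m_{n-1}$, and you then steer the L/R switches through the minimum-comparison rule of Theorem \ref{thm: construct mereg tree}. The dichotomy you flag as the remaining ``hard part'' is in fact already forced by what you wrote: since edge values dominate vertex values, the big component's minimum is $\mu_i$, the spine child of $n_{e_i}$ is the child attached to the big component, and the construction gives that child the direction of $n_{e_i}$ (which is $D_{M_f}(i-1)$) exactly when $\mu_i<f(w_i)$; unwinding Definition \ref{def: unique path notation} turns this into your stated equivalence, so no further idea is needed. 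Where you genuinely diverge is in the vertex assignment: the paper identifies the switch nodes $m_{i_1},\ldots,m_{i_k}$ in advance, gives their leaves the smallest labels $1,\ldots,k-1$, the center the label $k$, and the remaining leaves larger labels, and then checks two cases; you instead choose the vertex values greedily from the impasse back toward the root, using density of $\RR$ to place $f(w_i)$ on whichever side of $\mu_i$ the target sequence $D_M$ demands. Your adaptive scheme buys a shorter verification and avoids the paper's bookkeeping with the $m_{i_j}$, at the cost of not producing an explicit integer-valued function; it also keeps the count consistent with the paper's convention $S_n=K_{1,n-1}$ ($n$ leaves of $M$ for the $n$ vertices of $S_n$), whereas the paper's own proof pairs $n+1$ leaves with $S_n$.
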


\begin{proof}
Let $M$ be a thin merge tree with $n+1$ leaves and choose $T = S_n$.  We construct $f\colon S_n \to \RR$ such that $M_f = M$.  Consider the path $r=m_1, m_2, \ldots m_{n-1}$ from the root node $r$ to the unique impasse $m_{n-1}$ of $M$.  Now let
\begin{equation} \label{eq:1}
m_{i_1}, m_{i_2}, \ldots, m_{i_k}
\end{equation}
\noindent be the (possibly empty) nodes on the path such that $d_M(m_{i_j})\neq d_M(m_{i_j-1})$, that is, the nodes in the path that switch direction.  To keep track of the labeling and correspondence between $M$ and $S_n$, we will also label nodes of $M$. To that end, label the unique leaf of $m_{i_j}$ with label $j$ for $1\leq j \leq k-1$ and either of the leafs of the unique impasse $m_{n-1}$ with label $k$. Now choose $k-1$ corresponding non-center vertices of $S_n$ and label them $1,2, \ldots, k-1$. Choose the center vertex of $S_n$ to label $k$.  Traversing the path from $m_{n-1}$ to $m_1$, label each unlabeled leaf $k+1, k+2, \ldots, k+\ell=n+1$ in order and label the remaining vertices of $S_n$ by $k+1, \ldots, k+\ell$. Finally, we know that each internal node corresponds to the edge in $S_n$.  Traversing the path from $m_{n-1}$ to $m_1$ again, label in order each internal node $k+\ell+1, k+\ell+2, \ldots, k+\ell+d=2n+1$. The node $m_k$ is labeled $k+\ell+1$ and has children labeled $k$ and $k+1$.  Hence label the edge in $S_n$ connecting vertices $k$ and $k+1$ with label $k+\ell+1$. Continuing along the path $m_{n-1}$ to $m_1$, suppose we are at node $n_{k-i}$ with label $k+\ell+i$.  Then $m_{k-i}$ is adjacent to a leaf.  This leaf corresponds to a vertex in $S_n$ which is incident to a unique edge.  Label this edge $k+\ell+i$.  This completes the labeling of $S_n$.

It remains to show that the labeling described above is a critical discrete Morse function whose induced merge tree is the given thin merge tree $M$.  By construction, the vertices of $S_n$  are labeled first, and since each subsequent label is strictly greater than the previous label, each vertex has a value strictly less than each edge, and hence $f$ is a critical discrete Morse function. By Proposition \ref{prop: star impass}, the merge tree induced by a discrete Morse function on a star graph is a thin merge tree.  Hence it suffices to show that the LR sequence of $M$ agrees with the LR sequence of $M_f$, i.e., $D_M=D_{M_f}$.

We proceed inductively on the LR sequence, showing that $D_M(i+1)=D_M(i)$ if and only if $D_{M_f}(i+1)=D_{M_f}(i)$ for $0\leq i\leq n-2$, i.e., the LR sequence for $M_f$ changes directions precisely when the LR sequence for $M$ does. By both constructions, the root node for both $M$ and $M_f$ is given direction L by definition, i.e, $D_M(0)=D_{M_f}(0)=$L. Inductively, suppose the LR sequence for $M$ agrees with the LR sequence for $M_f$ up to $i$, $0\leq i\leq n-2$, and consider the node $m_{i+1}$ on the path $P_M$. Let $e$ be the edge of $S_n$ corresponding to node $m_{i+1}$.  Then $T_{f(e)-\ep}$ is a forest where the components of the endpoints of $e$ are a star graph $S_{i+1}$ and an isolated vertex $v$. Now this $v$ induces a leaf node in the construction of $M_f$.  Hence the path $P(M_f)$ will follow the direction of the node whose label is the maximum value of the star graph $S_{i+1}$. By the construction of Theorem \ref{thm: construct mereg tree}, the forest component with the smallest value will share the direction of the previous node. We proceed by cases.\\

\noindent \textit{Case 1:  $D_M(i+1)=D_M(i)$}

Suppose $D_M(i+1)=D_M(i)$. Then we need to show that there is a vertex of $S_{i+1}$ with label less than that of $v$. Since $d_M(m_i)=d_M(m_{i+1})$, it follows by construction that $m_{i+1}$ is not any of the $m_{i_j}$ from Equation (\ref{eq:1}). Thus, by the labeling of $M$, the leaf node of $m_{i+1}$ is given a value $t$ which is strictly greater than the value of either leaf of the impasse $m_k$. But by construction, $f(v)=t$ and the values assigned to the leaf nodes of $m_k$ are assigned to corresponding vertices of $S_{i+1}$. We conclude that $D_{M_f}(i+1)=D_{M_f}(i)$. \\

\noindent \textit{Case 2:  $D_M(i+1)\neq D_M(i)$}

Now suppose that $D_M(i+1)\neq D_M(i)$.  We need to show that $f(v)<f(u)$ for all $u\in S_{i+1}$. Since $d_M(m_i)\neq d_M(m_{i+1})$, then $m_{i+1}=m_{i_j}$ from Equation (\ref{eq:1}) for some $j$.  Hence by construction the leaf node of $m_{i+1}$ is given a value strictly less than the other $m_{i_k}$ for all $k>j$.  Since none of the leaf nodes of $m_{i_p}$, $p<k$, have corresponding vertices in $S_{i+1}$, the value $f(v)<f(u)$ for all $u\in S_{i+1}$. Thus $D_{M_f}(i+1)\neq D_{M_f}(i)$.  This completes the proof.

\end{proof}

\begin{cor}
For any star graph $S_n$, there are exactly $2^{n-1}$ possible merge trees induced by a critical discrete Morse function on $S_n$.
\end{cor}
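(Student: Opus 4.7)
The plan is to assemble the three preceding propositions of this section into a single counting argument; very little new content is required. First, I would apply Proposition \ref{prop: star impass}: any discrete Morse function on $S_n$---and hence, in particular, any critical one---induces a thin merge tree. Thus every merge tree that can arise from a critical discrete Morse function on $S_n$ is thin.

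Next, I would apply Proposition \ref{prop: dmf from thin} in the converse direction: given any thin merge tree $M$ of the appropriate size, there is a critical discrete Morse function $f$ on $S_n$ with $M_f = M$. These two facts together identify the collection of merge trees inducible by a critical discrete Morse function on $S_n$ with the collection of thin merge trees of the matching size. Finally, I would invoke Proposition \ref{prop: counting thin trees}, which delivers the count $2^{n-1}$.

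There is no genuine obstacle here; the corollary is a direct consequence of the three propositions. The only point requiring care is bookkeeping: one must confirm that a thin merge tree with as many leaves as $S_n$ has vertices also has the number of internal nodes needed for Proposition \ref{prop: counting thin trees} to yield exactly $2^{n-1}$. Since a full binary tree always has one more leaf than internal node, and since a critical discrete Morse function on $S_n$ puts the leaves of the induced merge tree in bijection with the vertices of $S_n$ and the internal nodes in bijection with the edges of $S_n$, the numerology aligns correctly and the proof is complete.
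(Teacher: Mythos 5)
Your argument is correct and is precisely the one the paper intends: the corollary appears without proof because it is nothing more than the conjunction of Proposition \ref{prop: star impass} (every merge tree induced on a star is thin), Proposition \ref{prop: dmf from thin} (every thin merge tree of the right size is so induced), and Proposition \ref{prop: counting thin trees} (the count of thin merge trees). The one place your bookkeeping deserves a second look is the final sentence: with your (correct) bijections of leaves with vertices and internal nodes with edges, the exponent $2^{n-1}$ comes out only under the convention used throughout this section and in the proof of Proposition \ref{prop: dmf from thin} --- namely that $S_n$ has $n$ edges and $n+1$ vertices, so the induced merge tree has $n$ internal nodes --- whereas the paper's literal definition $S_n=K_{1,n-1}$ ($n$ vertices, $n-1$ edges) would give $n-1$ internal nodes and hence $2^{n-2}$; the discrepancy is an indexing inconsistency in the paper rather than a flaw in your argument, but it is worth stating explicitly which convention you are using.
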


\begin{example}
We give an example illustrating the construction of Proposition \ref{prop: dmf from thin}. We will find a star graph and discrete Morse function on that graph that induces the merge tree
$$
\begin{tikzpicture}

\node[inner sep=2pt, circle] (0) at (0,2) [draw] {};
\node[inner sep=2pt, circle] (1) at (4,4) [draw] {};
\node[inner sep=2pt, circle] (2) at (1,5) [draw] {};
\node[inner sep=2pt, circle] (3) at (3,5) [draw] {};
\node[inner sep=2pt, circle] (4) at (1,3) [draw] {};
\node[inner sep=2pt, circle] (5) at (3,1) [draw] {};
\node[inner sep=2pt, circle] (6) at (2,4) [draw] {};
\node[inner sep=2pt, circle] (7) at (3,3) [draw] {};
\node[inner sep=2pt, circle] (8) at (2,2) [draw] {};
\node[inner sep=2pt, circle] (9) at (1,1) [draw] {};
\node[inner sep=2pt, circle] (10) at (2,0) [draw] {};

\draw[-]  (0)--(9) node[midway, below] {};
\draw[-]  (9)--(8) node[midway, below] {};
\draw[-]  (9)--(10) node[midway, below] {};
\draw[-]  (5)--(10) node[midway, below] {};
\draw[-]  (8)--(4) node[midway, below] {};
\draw[-]  (7)--(8) node[midway, below] {};
\draw[-]  (7)--(1) node[midway, below] {};
\draw[-]  (6)--(7) node[midway, below] {};
\draw[-]  (6)--(2) node[midway, below] {};
\draw[-]  (6)--(3) node[midway, below] {};

\end{tikzpicture}
$$

We first observe that because there are $6$ leaves, we choose $T:=S_5$. Beginning at the root vertex of $M$, we arrive at the impasse through the sequence of moves (L)LRRL.  At each switch from L to R or R to L, we label the corresponding leaf with with the next integer value yielding

$$
\begin{tikzpicture}

\node[inner sep=2pt, circle] (0) at (0,2) [draw] {};
\node[inner sep=2pt, circle] (1) at (4,4) [draw] {};
\node[inner sep=2pt, circle] (2) at (1,5) [draw] {};
\node[inner sep=2pt, circle] (3) at (3,5) [draw] {};
\node[inner sep=2pt, circle] (4) at (1,3) [draw] {};
\node[inner sep=2pt, circle] (5) at (3,1) [draw] {};
\node[inner sep=2pt, circle] (6) at (2,4) [draw] {};
\node[inner sep=2pt, circle] (7) at (3,3) [draw] {};
\node[inner sep=2pt, circle] (8) at (2,2) [draw] {};
\node[inner sep=2pt, circle] (9) at (1,1) [draw] {};
\node[inner sep=2pt, circle] (10) at (2,0) [draw] {};

\draw[-]  (0)--(9) node[midway, below] {};
\draw[-]  (9)--(8) node[midway, below] {};
\draw[-]  (9)--(10) node[midway, below] {};
\draw[-]  (5)--(10) node[midway, below] {};
\draw[-]  (8)--(4) node[midway, below] {};
\draw[-]  (7)--(8) node[midway, below] {};
\draw[-]  (7)--(1) node[midway, below] {};
\draw[-]  (6)--(7) node[midway, below] {};
\draw[-]  (6)--(2) node[midway, below] {};
\draw[-]  (6)--(3) node[midway, below] {};

\node[anchor = north]  at (0) {{$1$}};
\node[anchor = north]  at (1) {{$2$}};
\node[anchor = north]  at (2) {{$3$}};

\end{tikzpicture}
$$

This corresponds to vertices in $S_5$, with the last value given to the center vertex

$$
\begin{tikzpicture}

\node[inner sep=2pt, circle] (0) at (1,1) [draw] {};
\node[inner sep=2pt, circle] (1) at (1,-1) [draw] {};
\node[inner sep=2pt, circle] (2) at (0,0) [draw] {};
\node[inner sep=2pt, circle] (3) at (0,1.5) [draw] {};
\node[inner sep=2pt, circle] (4) at (-1,1) [draw] {};
\node[inner sep=2pt, circle] (5) at (-1,-1) [draw] {};

\draw[-]  (2)--(0) node[midway, below] {};
\draw[-]  (2)--(1) node[midway, below] {};
\draw[-]  (2)--(3) node[midway, below] {};
\draw[-]  (5)--(2) node[midway, below] {};
\draw[-]  (4)--(2) node[midway, below] {};

\node[anchor = north]  at (0) {{$1$}};
\node[anchor = north]  at (1) {{$2$}};
\node[anchor = north]  at (2) {{$3$}};

\end{tikzpicture}
$$

Now traverse the path in $M$ from the impasse to the root, labeling the leaves $4,5 \ldots$.
$$
\begin{tikzpicture}

\node[inner sep=2pt, circle] (0) at (0,2) [draw] {};
\node[inner sep=2pt, circle] (1) at (4,4) [draw] {};
\node[inner sep=2pt, circle] (2) at (1,5) [draw] {};
\node[inner sep=2pt, circle] (3) at (3,5) [draw] {};
\node[inner sep=2pt, circle] (4) at (1,3) [draw] {};
\node[inner sep=2pt, circle] (5) at (3,1) [draw] {};
\node[inner sep=2pt, circle] (6) at (2,4) [draw] {};
\node[inner sep=2pt, circle] (7) at (3,3) [draw] {};
\node[inner sep=2pt, circle] (8) at (2,2) [draw] {};
\node[inner sep=2pt, circle] (9) at (1,1) [draw] {};
\node[inner sep=2pt, circle] (10) at (2,0) [draw] {};

\draw[-]  (0)--(9) node[midway, below] {};
\draw[-]  (9)--(8) node[midway, below] {};
\draw[-]  (9)--(10) node[midway, below] {};
\draw[-]  (5)--(10) node[midway, below] {};
\draw[-]  (8)--(4) node[midway, below] {};
\draw[-]  (7)--(8) node[midway, below] {};
\draw[-]  (7)--(1) node[midway, below] {};
\draw[-]  (6)--(7) node[midway, below] {};
\draw[-]  (6)--(2) node[midway, below] {};
\draw[-]  (6)--(3) node[midway, below] {};

\node[anchor = north]  at (0) {{$1$}};
\node[anchor = north]  at (1) {{$2$}};
\node[anchor = north]  at (2) {{$3$}};
\node[anchor = north]  at (3) {{$4$}};
\node[anchor = north]  at (4) {{$5$}};
\node[anchor = north]  at (5) {{$6$}};

\end{tikzpicture}
$$
This corresponds to the same labels on the remaining leaves of $S_5$:

$$
\begin{tikzpicture}

\node[inner sep=2pt, circle] (0) at (1,1) [draw] {};
\node[inner sep=2pt, circle] (1) at (1,-1) [draw] {};
\node[inner sep=2pt, circle] (2) at (0,0) [draw] {};
\node[inner sep=2pt, circle] (3) at (0,1.5) [draw] {};
\node[inner sep=2pt, circle] (4) at (-1,1) [draw] {};
\node[inner sep=2pt, circle] (5) at (-1,-1) [draw] {};

\draw[-]  (2)--(0) node[midway, below] {};
\draw[-]  (2)--(1) node[midway, below] {};
\draw[-]  (2)--(3) node[midway, below] {};
\draw[-]  (5)--(2) node[midway, below] {};
\draw[-]  (4)--(2) node[midway, below] {};

\node[anchor = north]  at (0) {{$1$}};
\node[anchor = north]  at (1) {{$2$}};
\node[anchor = north]  at (2) {{$3$}};
\node[anchor = south]  at (3) {{$4$}};
\node[anchor = north]  at (4) {{$5$}};
\node[anchor = north]  at (5) {{$6$}};

\end{tikzpicture}
$$

Traverse the same path in $M$, labeling the internal nodes $7, 8\ldots$
$$
\begin{tikzpicture}

\node[inner sep=2pt, circle] (0) at (0,2) [draw] {};
\node[inner sep=2pt, circle] (1) at (4,4) [draw] {};
\node[inner sep=2pt, circle] (2) at (1,5) [draw] {};
\node[inner sep=2pt, circle] (3) at (3,5) [draw] {};
\node[inner sep=2pt, circle] (4) at (1,3) [draw] {};
\node[inner sep=2pt, circle] (5) at (3,1) [draw] {};
\node[inner sep=2pt, circle] (6) at (2,4) [draw] {};
\node[inner sep=2pt, circle] (7) at (3,3) [draw] {};
\node[inner sep=2pt, circle] (8) at (2,2) [draw] {};
\node[inner sep=2pt, circle] (9) at (1,1) [draw] {};
\node[inner sep=2pt, circle] (10) at (2,0) [draw] {};

\draw[-]  (0)--(9) node[midway, below] {};
\draw[-]  (9)--(8) node[midway, below] {};
\draw[-]  (9)--(10) node[midway, below] {};
\draw[-]  (5)--(10) node[midway, below] {};
\draw[-]  (8)--(4) node[midway, below] {};
\draw[-]  (7)--(8) node[midway, below] {};
\draw[-]  (7)--(1) node[midway, below] {};
\draw[-]  (6)--(7) node[midway, below] {};
\draw[-]  (6)--(2) node[midway, below] {};
\draw[-]  (6)--(3) node[midway, below] {};

\node[anchor = north]  at (0) {{$1$}};
\node[anchor = north]  at (1) {{$2$}};
\node[anchor = north]  at (2) {{$3$}};
\node[anchor = north]  at (3) {{$4$}};
\node[anchor = north]  at (4) {{$5$}};
\node[anchor = north]  at (5) {{$6$}};
\node[anchor = north]  at (6) {{$7$}};
\node[anchor = north]  at (7) {{$8$}};
\node[anchor = north]  at (8) {{$9$}};
\node[anchor = north]  at (9) {{$10$}};
\node[anchor = north]  at (10) {{$11$}};

\end{tikzpicture}
$$

Finally, each edge of $S_5$ is labeled with the same label as the internal nodes of $M$.  If edge $e\in S_5$ is incident with non-center vertex labeled $a$, then the leaf in $M$ labeled $a$ is incident with an interior node labeled $b$.  Thus define $f(e)=b$ so that the discrete Morse function is
$$
\begin{tikzpicture}

\node[inner sep=2pt, circle] (0) at (1,1) [draw] {};
\node[inner sep=2pt, circle] (1) at (1,-1) [draw] {};
\node[inner sep=2pt, circle] (2) at (0,0) [draw] {};
\node[inner sep=2pt, circle] (3) at (0,1.5) [draw] {};
\node[inner sep=2pt, circle] (4) at (-1,1) [draw] {};
\node[inner sep=2pt, circle] (5) at (-1,-1) [draw] {};

\draw[-]  (2)--(0) node[midway, below] {$10$};
\draw[-]  (2)--(1) node[midway, below] {$8$};
\draw[-]  (2)--(3) node[midway, left] {$7$};
\draw[-]  (5)--(2) node[midway, below] {$11$};
\draw[-]  (4)--(2) node[midway, below] {$9$};

\node[anchor = north]  at (0) {{$1$}};
\node[anchor = north]  at (1) {{$2$}};
\node[anchor = north]  at (2) {{$3$}};
\node[anchor = south]  at (3) {{$4$}};
\node[anchor = north]  at (4) {{$5$}};
\node[anchor = north]  at (5) {{$6$}};

\end{tikzpicture}
$$

\end{example}

\section{Future directions and open questions}\label{Conclusion and future work}

In this final section, we share some ideas for future directions that one could pursue.

We were able to compute all merge trees induced by a discrete Morse function on a star graph.  What other classes of merge trees can be realized?  Can one characterize the set of all merge equivalent discrete Morse functions on a caterpillar graph, regular tree, binary tree, or other class of trees? Conversely, given a class of merge trees with some special property, can we find a characterization of the graphs whose set of all merge equivalent discrete Morse functions induces this class of merge trees?

Can any merge tree be realized by a discrete Morse function on some graph?  We conjecture that this can be done with the right discrete Morse function on a path. Conversely, we conjecture that if $T$ is a tree containing a vertex with degree greater than $2$, then there exists a merge tree that cannot be realized by any discrete Morse function on $T$.

We have seen that there is a one to one correspondence between thin merge trees on $n+1$ leaves and merge equivalent discrete Morse functions on $S_n$.  Thin merge trees, however, do not characterize star graphs, as Example \ref{example impasse matching} shows that a non-star graph can induce a thin merge tree.  Is there a substantive inverse problem in this setting?  That is, given some class of merge trees induced by a discrete Morse function on a tree or a tree up to some notion of equivalence, can we use this class of merge trees to reconstruct the isomorphism type of the tree?

There are variations of equivalence classes of merge trees that can be considered.  For example, one could require that there be a total ordering on the vertices of the merge tree, a so-called chiral merge tree \cite{Curry-2017}.  Or one could think of of each edge as having a length by putting some weight on the edges.  One could also relax the condition that merge trees require a distinction between the left and right children.

\bibliographystyle{amsplain}
\bibliography{Merge}

\end{document}